\numberwithin{equation}{section}
\newcommand{\Nzero}{{\mathbb N}_{0}}
\newcommand{\Zplus}{{\mathbb Z^{+}}}
\numberwithin{equation}{section}
\newtheorem{thm}{Theorem}[section]
\title{Some remarkable infinite product identities involving Fibonacci and Lucas numbers\thanks{AMS Classification Numbers : 11B39, 11Y60}} 
\author[]{Kunle Adegoke \thanks{kunle.adegoke@yandex.com, adegoke00@gmail.com}}
\affil{Department of Physics and Engineering Physics, \mbox{Obafemi Awolowo University}, Ile-Ife, Nigeria}
\begin{document}

\date{}

\maketitle

\begin{abstract}
\noindent By applying the classic telescoping summation formula and its variants to identities involving inverse hyperbolic tangent functions having inverse powers of the golden ratio as arguments and employing subtle properties of the Fibonacci and Lucas numbers, we derive interesting general infinite product identities involving these numbers.
\end{abstract}

\section{Introduction}
The following {\em striking} (in Frontczak's description~\cite{frontczak}) infinite product identity involving Fibonacci numbers,
\[
\prod_{k = 1}^\infty  {\frac{{F_{2k + 2}  + 1}}{{F_{2k + 2}  - 1}}}  = 3\,,
\]
originally obtained by Melham and Shannon~\cite{melham}, is an $n=1=q$ case of a more general identity (to be derived in this present paper):
\[
\prod_{k = 1}^\infty  {\frac{{F_{(2n - 1)(2k + 2q)}  + F_{2q(2n - 1)} }}{{F_{(2n - 1)(2k + 2q)}  - F_{2q(2n - 1)} }}}  = \prod_{k = 1}^q {\frac{{F_{(2n - 1)(2k - 1)} L_{(2n - 1)2k} }}{{L_{(2n - 1)(2k - 1)} F_{(2n - 1)2k} }}}\,.
\]
The above identity is valid for $q\in\Nzero, n\in\Zplus$, where we use $\Nzero$ to denote the set of natural numbers including zero.

\bigskip

Here the Fibonacci numbers, $F_n$, and Lucas numbers, $L_n$, are defined, for $n\in\Nzero$, as usual, through the recurrence relations \mbox{$F_n=F_{n-1}+F_{n-2}$}, with $F_0=0$, $F_1=1$ and \mbox{$L_n=L_{n-1}+L_{n-2}$}, with $L_0=2$, $L_1=1$.

\bigskip

Our goal in this paper is to derive several general infinite product identities, including the one given above. Our method consists of applying the following general telescoping summation property (equation~(2.1) of \cite{basu}, also~(2.17) of~\cite{gould})
\begin{equation}\label{equ.p2fefzx}
\sum_{k = 1}^N {\left[ {f(k) - f(k + m)} \right]}  = \sum_{k = 1}^m {f(k)}  - \sum_{k = 1}^m {f(k + N)},\quad\mbox{for $N\ge m\ge 1$}\,,
\end{equation}
and its alternating version, obtained by replacing $f(k)$ with $(-1)^{k-1}f(k)$ in~\eqref{equ.p2fefzx}, namely,
\begin{equation}\label{equ.mdgx80r}
\begin{split}
&\sum_{k = 1}^N {( - 1)^{k - 1} \left[ {f(k) + ( - 1)^{m - 1} f(k + m)} \right]}\\ 
&\quad= \sum_{k = 1}^m {( - 1)^{k - 1} f(k)}  + ( - 1)^{N - 1} \sum_{k = 1}^m {( - 1)^{k - 1} f(k + N)} 
\end{split}
\end{equation}
to some inverse hyperbolic tangent identities with the inverse powers of the golden mean as arguments of the inverse hyperbolic tangent functions.

\bigskip

If $f(N)\to 0$ as $N\to\infty$, then we have, from~\eqref{equ.p2fefzx} and~\eqref{equ.mdgx80r}, the useful identities
\begin{equation}\label{equ.gy1asjs}
\sum_{k = 1}^\infty  {\left[ {f(k) - f(k + m)} \right]}  = \sum_{k = 1}^m {f(k)} 
\end{equation}
and
\begin{equation}\label{equ.vdz8jvp}
\sum_{k = 1}^\infty  {( - 1)^{k - 1} \left[ {f(k) + ( - 1)^{m - 1} f(k + m)} \right]}  = \sum_{k = 1}^m {( - 1)^{k - 1} f(k)}\,.
\end{equation}
In handling infinite products, the following identity will often come in handy 
\begin{equation}\label{equ.qvxo2rv}
\prod_{k = 1}^m {f(k)}  = \prod_{k = 1}^{\left\lceil {m/2} \right\rceil } {f(2k - 1)} \prod_{k = 1}^{\left\lfloor {m/2} \right\rfloor } {f(2k)}\,,
\end{equation}
where $\lfloor u\rfloor$ is the greatest integer less than or equal to $u$ and $\lceil u\rceil$ is the smallest integer greater than or equal to $u$.

\bigskip

Specifically, we have
\[
\prod_{k=1}^{2q}f(k)=\prod_{k=1}^q{f(2k-1)f(2k)}
\]
and
\[
\prod_{k=1}^{2q-1}f(k)=\prod_{k=1}^q{f(2k-1)}\prod_{k=1}^{q-1}{f(2k)}\,.
\]
We shall adopt the following empty product convention:
\[\prod_{k=1}^0 f(k)=1\,.\]
The golden ratio, having the numerical value of $(\sqrt 5+1)/2$, denoted throughout this paper by $\phi$, will appear frequently. We shall often make use of the following algebraic properties of $\phi$

\begin{subequations}
\begin{eqnarray}\label{equ.xj19lbd}
\phi^2 &=& 1+\phi\,,\label{equ.zuch1o9}\\
\sqrt 5 &=& 2\phi-1\,,\label{equ.ln4di53}\\
\phi-1 &=& 1/\phi\,,\label{equ.epgagl8}\\
\phi^n &=& \phi F_n+F_{n-1}\,,\label{equ.n1xmwpl}\\
\phi^{-n} &=& (-1)^n(-\phi F_n+F_{n+1})\,,\label{equ.wr334af}\\
\mbox{and}\nonumber\\
\phi^n &=& \phi^{n-1}+\phi^{n-2}\label{equ.gxbvppb}\,.
\end{eqnarray}
\end{subequations}

\bigskip

Using Binet's formula, $F_n\sqrt 5=\phi^n-(-\phi)^{-n}$,~$n\in\mathbb{Z}$ and identities~\eqref{equ.n1xmwpl} and \eqref{equ.wr334af}, it is straightforward to establish the following useful identities
\begin{subequations}\label{equ.z0ctl7a}
\begin{eqnarray}
\phi^{n}-\phi^{-n} &=& F_n\sqrt 5,\quad\mbox{$n$ even}\label{equ.myu8b1u}\\
\phi^{n}+\phi^{-n} &=& L_n,\quad\mbox{$n$ even}\\
\phi^{n}+\phi^{-n} &=& F_n\sqrt 5,\quad\mbox{$n$ odd}\\
\phi^{n}-\phi^{-n} &=& L_n,\quad\mbox{$n$ odd}
\end{eqnarray}
\end{subequations}
From~\eqref{equ.z0ctl7a} we also have the following useful results
\begin{subequations}\label{equ.zdqegff}
\begin{eqnarray}
\frac{\phi^n+1}{\phi^n-1}&=&\frac{F_n\sqrt 5+2}{L_n},\quad \mbox{$n$ odd,}\\
\frac{\phi^n+1}{\phi^n-1}&=&\frac{F_n\sqrt 5}{L_n-2},\quad \mbox{$n$ even}\,.
\end{eqnarray}
\end{subequations}

\bigskip

We shall make repeated use of the following identities connecting Fibonacci and Lucas numbers:
\begin{subequations}\label{equ.u4fy3q3}
\begin{eqnarray}\label{equ.wsr7wzv}
F_{2n}=F_nL_n\,,\\
L_{2n}-2(-1)^n=5F_n^2\label{equ.s6sil39}\,,\\
5F_n^2-L_n^2=4(-1)^{(n+1)}\,,\\
L_{2n}+2(-1)^n=L_n^2\label{equ.wle2jmq}\,.
\end{eqnarray}
\end{subequations}
Identities~\eqref{equ.u4fy3q3} or their variations can be found in~\cite{basin2, dunlap, howard}. 

\bigskip

Finally, the relevant inverse hyperbolic function identities that we shall require are the following 
\begin{subequations}
\begin{eqnarray}
\tanh^{-1} x+\tanh^{-1} y &=& \tanh^{-1}\left (\frac{x+y}{1+xy}\right )\,,\quad xy<1\label{equ.ohqvcqn}\\
\tanh^{-1} x-\tanh^{-1} y &=& \tanh^{-1}\left (\frac{x-y}{1-xy}\right )\,,\quad xy>-1\,\label{equ.gdoxus5},\\
\tanh^{-1}\left(\frac{x}{y}\right)&=&\frac{1}{2}\log\left(\frac{y+x}{y-x}\right),\quad |x|<|y|,\label{equ.clyqxe1}\\
\tanh^{-1}\left(\frac{y}{x}\right)&=&\tanh^{-1}\left(\frac{x}{y}\right)-i\frac{\pi}{2},\quad |x|<|y|\label{equ.k7jhoz9}\,.
\end{eqnarray} 
\end{subequations}
Infinite product identities involving Fibonacci numbers and Lucas numbers were also derived in~\cite{frontczak,melham,melham2} and references therein.
\section{Infinite product identities}
\subsection{Identities resulting from taking $f(k)=\tanh^{-1}(\phi^{-2pk})$~in~\eqref{equ.gy1asjs} }
\begin{thm}
For $q,n\in\Zplus$ the following infinite product identities hold:
\begin{equation}\label{equ.dh73nut}
\prod_{k = 1}^\infty  {\frac{{L_{(2n - 1)(2k + 2q - 1)}  + L_{(2n - 1)(2q - 1)} }}{{L_{(2n - 1)(2k + 2q - 1)}  - L_{(2n - 1)(2q - 1)} }}}  = \sqrt 5 \prod_{k = 1}^q {\frac{{F_{(2k - 1)(2n - 1)} }}{{L_{(2k - 1)(2n - 1)} }}} \prod_{k = 1}^{q - 1} {\frac{{L_{2k(2n - 1)} }}{{F_{2k(2n - 1)} }}}\,,
\end{equation}
\begin{equation}\label{equ.u36rrvi}
\prod_{k = 1}^\infty  {\frac{{F_{2n(2k + 2q - 1)}  + F_{2n(2q - 1)} }}{{F_{2n(2k + 2q - 1)}  - F_{2n(2q - 1)} }}}  = \frac{1}{{(\sqrt 5 )^{2q - 1} }}\prod_{k = 1}^{2q - 1} {\frac{{L_{2nk} }}{{F_{2nk} }}}\,,
\end{equation}
\begin{equation}\label{equ.if6cwjv}
\prod_{k = 1}^\infty  {\frac{{F_{4n(k + q)}  + F_{4nq} }}{{F_{4n(k + q)}  - F_{4nq} }}}  = \frac{1}{{5^q }}\prod_{k = 1}^{2q} {\frac{{L_{2nk} }}{{F_{2nk} }}}
\end{equation}
and
\begin{equation}\label{equ.unbkp7a}
\prod_{k = 1}^\infty  {\frac{{F_{(2n - 1)(2k + 2q)}  + F_{2q(2n - 1)} }}{{F_{(2n - 1)(2k + 2q)}  - F_{2q(2n - 1)} }}}  = \prod_{k = 1}^q {\frac{{F_{(2n - 1)(2k - 1)} L_{(2n - 1)2k} }}{{L_{(2n - 1)(2k - 1)} F_{(2n - 1)2k} }}}\,.
\end{equation}

\end{thm}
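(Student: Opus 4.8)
The plan is to specialise the telescoping identity \eqref{equ.gy1asjs} to $f(k)=\tanh^{-1}(\phi^{-2pk})$ with the parameter choice $p=2n-1$ and shift $m=2q$, and then convert the resulting sum identity into the claimed product by exponentiation. Since $\phi>1$ we have $f(N)=\tanh^{-1}(\phi^{-2(2n-1)N})\to 0$ as $N\to\infty$, so \eqref{equ.gy1asjs} applies and gives $\sum_{k=1}^\infty\bigl[f(k)-f(k+2q)\bigr]=\sum_{k=1}^{2q}f(k)$. The two sides of this will become, after doubling and exponentiating, the two sides of \eqref{equ.unbkp7a}.

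First I would treat the left-hand side. Using the subtraction formula \eqref{equ.gdoxus5} (whose hypothesis $xy>-1$ holds because both arguments are positive) and factoring $\phi^{-(2n-1)(2k+2q)}$ out of numerator and denominator, a short computation gives
\[
f(k)-f(k+2q)=\tanh^{-1}\!\left(\frac{\phi^{(2n-1)2q}-\phi^{-(2n-1)2q}}{\phi^{(2n-1)(2k+2q)}-\phi^{-(2n-1)(2k+2q)}}\right).
\]
Because both exponents $(2n-1)2q$ and $(2n-1)(2k+2q)$ are even, the even-index relations in \eqref{equ.z0ctl7a} turn the numerator and denominator into $F_{2q(2n-1)}\sqrt5$ and $F_{(2n-1)(2k+2q)}\sqrt5$; the factors $\sqrt5$ cancel. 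Applying \eqref{equ.clyqxe1} (valid since the larger index makes the denominator dominate) then yields $2[f(k)-f(k+2q)]=\log\frac{F_{(2n-1)(2k+2q)}+F_{2q(2n-1)}}{F_{(2n-1)(2k+2q)}-F_{2q(2n-1)}}$, so doubling and exponentiating the telescoped sum reproduces exactly the infinite product on the left of \eqref{equ.unbkp7a}.

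Next I would evaluate the right-hand side $2\sum_{k=1}^{2q}f(k)=\sum_{k=1}^{2q}\log\frac{\phi^{(2n-1)k}+\phi^{-(2n-1)k}}{\phi^{(2n-1)k}-\phi^{-(2n-1)k}}$, obtained by rewriting each $2\tanh^{-1}(\phi^{-2(2n-1)k})$ with \eqref{equ.clyqxe1} and clearing $\phi^{(2n-1)k}$. Here is where I expect the main (though purely mechanical) obstacle: since $2n-1$ is odd, the index $(2n-1)k$ has the same parity as $k$, so the correct branch of \eqref{equ.z0ctl7a} must be chosen according to whether $k$ is odd or even. Splitting the product into odd- and even-indexed factors via \eqref{equ.qvxo2rv} with $m=2q$, the odd terms contribute $F_{(2n-1)(2k-1)}\sqrt5/L_{(2n-1)(2k-1)}$ and the even terms contribute $L_{(2n-1)2k}/(F_{(2n-1)2k}\sqrt5)$.

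Finally I would observe the decisive cancellation: the $q$ factors $\sqrt5$ from the odd terms exactly cancel the $q$ factors $1/\sqrt5$ from the even terms, leaving $\prod_{k=1}^{q}\frac{F_{(2n-1)(2k-1)}L_{(2n-1)2k}}{L_{(2n-1)(2k-1)}F_{(2n-1)2k}}$, which is the right-hand side of \eqref{equ.unbkp7a}. Equating the two exponentiated sides of the telescoping identity finishes the proof. The only genuinely delicate point is this parity-driven split together with the $\sqrt5$ cancellation; it is precisely what distinguishes \eqref{equ.unbkp7a}, whose right-hand side is free of $\sqrt5$, from the companion identities \eqref{equ.u36rrvi} and \eqref{equ.if6cwjv}, where an odd shift leaves surviving powers of $\sqrt5$.
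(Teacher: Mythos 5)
Your argument for \eqref{equ.unbkp7a} is correct and is essentially the paper's own route for that identity: the same choice $f(k)=\tanh^{-1}(\phi^{-2pk})$ with $p=2n-1$, $m=2q$ in \eqref{equ.gy1asjs}, the same conversion via \eqref{equ.gdoxus5}, \eqref{equ.z0ctl7a} and \eqref{equ.clyqxe1}, and the same parity split of the product over $k=1,\dots,2q$ with the $\sqrt 5$ cancellation. The paper merely packages the left-hand side once and for all as the sum identity \eqref{equ.i2hcpbw} and then specializes $p$ and $m$.

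The gap is that the theorem asserts four identities and you have proved one. For \eqref{equ.u36rrvi} and \eqref{equ.if6cwjv} your closing remark correctly identifies what changes ($p=2n$ even, so every index $2nk$ is even and a power of $\sqrt 5$ survives), and filling that in is routine. But \eqref{equ.dh73nut} is not covered by your setup at all: there $m=2q-1$ and $p=2n-1$, so $pm$ is odd, the telescoped difference $f(k)-f(k+m)$ has the odd exponent $pm$ in its numerator and the odd exponent $p(2k+m)$ in its denominator, and \eqref{equ.z0ctl7a} then produces a ratio of Lucas numbers $L_{pm}/L_{p(2k+m)}$ --- which is why the left-hand product in \eqref{equ.dh73nut} involves $L$'s rather than $F$'s. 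Moreover the right-hand side $\prod_{k=1}^{2q-1}\bigl(\phi^{2pk}+1\bigr)/\bigl(\phi^{2pk}-1\bigr)$ must now be evaluated with \eqref{equ.zdqegff} as $\prod_{k=1}^{2q-1} F_{2pk}\sqrt 5/(L_{2pk}-2)$, and reducing $L_{2pk}-2$ requires a further parity split together with \eqref{equ.s6sil39} and \eqref{equ.wle2jmq} (i.e.\ $L_{2j}-2(-1)^j=5F_j^2$ and $L_{2j}+2(-1)^j=L_j^2$), plus the bookkeeping of an odd-length product via \eqref{equ.qvxo2rv}. None of these steps appear in your proposal, so as written it establishes only a quarter of the statement.
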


\begin{proof}
Choosing  $f(k) = \tanh ^{ - 1} (\phi ^{-2pk}) $ in~\eqref{equ.gy1asjs} and using~\eqref{equ.gdoxus5} and~\eqref{equ.z0ctl7a}, we obtain
\begin{equation}\label{equ.aidawqg}
\sum_{k = 1}^\infty  {\tanh ^{ - 1} \left[ {\frac{{L_{pm} }}{{L_{p(2k + m)} }}} \right]}  = \sum_{k = 1}^m {\tanh ^{ - 1} \left[ {\frac{1}{{\phi ^{2pk} }}} \right]},\quad\mbox{$p\,m$ odd}
\end{equation}
and
\begin{equation}\label{equ.i2hcpbw}
\sum_{k = 1}^\infty  {\tanh ^{ - 1} \left[ {\frac{{F_{pm} }}{{F_{p(2k + m)} }}} \right]}  = \sum_{k = 1}^m {\tanh ^{ - 1} \left[ {\frac{1}{{\phi ^{2pk} }}} \right]},\quad\mbox{$p\,m$ even}\,.
\end{equation}
Setting $p=2n-1$ and $m=2q-1$ in~\eqref{equ.aidawqg} and converting the infinite sum identity to an infinite product identity, employing also the identities~\eqref{equ.zdqegff}, \eqref{equ.s6sil39}, \eqref{equ.wle2jmq} and~\eqref{equ.qvxo2rv}, we have
\begin{equation}
\begin{split}
&\prod_{k = 1}^\infty  {\frac{{L_{(2n - 1)(2k + 2q - 1)}  + L_{(2n - 1)(2q - 1)} }}{{L_{(2n - 1)(2k + 2q - 1)}  - L_{(2n - 1)(2q - 1)} }}}\\ 
&\qquad= \prod_{k = 1}^{2q - 1} {\frac{{F_{2k(2n - 1)} \sqrt 5 }}{{L_{2k(2n - 1)}  - 2}}}\\ 
&\qquad= (\sqrt 5 )^{2q - 1} \prod_{k = 1}^{2q - 1} {\frac{{F_{2k(2n - 1)} }}{{L_{2k(2n - 1)}  - 2}}}\\
 &\qquad= (\sqrt 5 )^{2q - 1} \prod_{k = 1}^q {\frac{{F_{2(2k - 1)(2n - 1)} }}{{L_{2(2k - 1)(2n - 1)}  - 2}}} \prod_{k = 1}^{q - 1} {\frac{{F_{4k(2n - 1)} }}{{L_{4k(2n - 1)}  - 2}}}\\ 
 &\qquad= (\sqrt 5 )^{2q - 1} \prod_{k = 1}^q {\frac{{F_{(2k - 1)(2n - 1)} L_{(2k - 1)(2n - 1)} }}{{L_{(2k - 1)(2n - 1)}^2 }}} \prod_{k = 1}^{q - 1} {\frac{{F_{2k(2n - 1)} L_{2k(2n - 1)} }}{{5F_{2k(2n - 1)}^2 }}}\\ 
 &\qquad= \sqrt 5 \prod_{k = 1}^q {\frac{{F_{(2k - 1)(2n - 1)} }}{{L_{(2k - 1)(2n - 1)} }}} \prod_{k = 1}^{q - 1} {\frac{{L_{2k(2n - 1)} }}{{F_{2k(2n - 1)} }}}\,, 
\end{split}
\end{equation}

which proves~\eqref{equ.dh73nut}.

\bigskip

The possibilities for the choice of $m$ and $p$ in~\eqref{equ.i2hcpbw} are \mbox{(i) $p$ even, $m$ odd}, \mbox{(ii) $p$ even, $m$ even} and \mbox{(iii) $p$ odd, $m$ even}. Converting the infinite sums in~\eqref{equ.i2hcpbw} to infinite products and setting $p=2n$ and $m=2q-1$ proves identity~\eqref{equ.u36rrvi}; while the identity~\eqref{equ.if6cwjv} is proved by setting $p=2n$ and $m=2q$. Finally, identity~\eqref{equ.unbkp7a} is proved by setting $p=2n-1$ and $m=2q$ in the infinite product identity resulting from~~\eqref{equ.i2hcpbw}.

\end{proof}

\subsection{Identities resulting from taking $f(k)=\tanh^{-1}(\phi^{-p(2k-1)})$~in~\eqref{equ.gy1asjs} }
\begin{thm}
For $q,n\in\Zplus$ the following infinite product identities hold:
\begin{equation}\label{equ.ujzmeac}
\prod_{k = 1}^\infty  {\frac{{F_{4n(2k + q - 1)}  + F_{4nq} }}{{F_{4n(2k + q - 1)}  - F_{4nq} }}}  = \frac{1}{{\sqrt 5 ^q }}\prod_{k = 1}^q {\frac{{L_{2n(2k - 1)} }}{{F_{2n(2k - 1)} }}}\,,
\end{equation}
\begin{equation}
\prod_{k = 1}^\infty  {\frac{{F_{(4n - 2)(2k + q - 1)}  + F_{2q(2n - 1)} }}{{F_{(4n - 2)(2k + q - 1)}  - F_{2q(2n - 1)} }}}  = \sqrt 5 ^q \prod_{k = 1}^q {\frac{{F_{(2n - 1)(2k - 1)} }}{{L_{(2n - 1)(2k - 1)} }}}\,,
\end{equation}
\begin{equation}\label{equ.ufuabxd}
\prod_{k = 1}^\infty  {\frac{{L_{(2k - 1 + 2q)(2n - 1)}  + \sqrt 5 F_{2q(2n - 1)} }}{{L_{(2k - 1 + 2q)(2n - 1)}  - \sqrt 5 F_{2q(2n - 1)} }}}  = \prod_{k = 1}^{2q} {\frac{{F_{(2k - 1)(2n - 1)} \sqrt 5  + 2}}{{L_{(2k - 1)(2n - 1)} }}}
\end{equation}
and
\begin{equation}\label{equ.qg32f22}
\prod_{k = 1}^\infty  {\frac{{\sqrt 5 F_{2(2n-1)(k + q - 1)}  + L_{(2n - 1)(2q - 1)} }}{{\sqrt 5 F_{2(2n-1)(k + q - 1)}  - L_{(2n - 1)(2q - 1)} }}}  = \prod_{k = 1}^{2q - 1} {\frac{{F_{(2k - 1)(2n - 1)} \sqrt 5  + 2}}{{L_{(2k - 1)(2n - 1)} }}}\,.
\end{equation}

\end{thm}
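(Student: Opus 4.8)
The plan is to mirror the strategy of the preceding theorem: substitute $f(k)=\tanh^{-1}(\phi^{-p(2k-1)})$ into the telescoping identity~\eqref{equ.gy1asjs}, which is legitimate since $\phi>1$ forces $f(N)=\tanh^{-1}(\phi^{-p(2N-1)})\to 0$ as $N\to\infty$. The first task is to collapse the generic summand $f(k)-f(k+m)=\tanh^{-1}(\phi^{-p(2k-1)})-\tanh^{-1}(\phi^{-p(2k+2m-1)})$ into a single closed form. Applying the subtraction formula~\eqref{equ.gdoxus5} (whose hypothesis $xy>-1$ is automatic, as all arguments are positive) and then clearing the quotient $\frac{x-y}{1-xy}$ by multiplying numerator and denominator by $\phi^{p(2k+m-1)}$ (equivalently, symmetrizing about the mean exponent), the two outer exponents cancel and one is left with
\[
\sum_{k=1}^\infty \tanh^{-1}\!\left[\frac{\phi^{pm}-\phi^{-pm}}{\phi^{p(2k+m-1)}-\phi^{-p(2k+m-1)}}\right]=\sum_{k=1}^m \tanh^{-1}\!\left(\phi^{-p(2k-1)}\right).
\]
I would treat this as the master sum identity from which all four products flow.

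Next I would pass to Fibonacci and Lucas numbers. The exponent $pm$ and the exponent $p(2k+m-1)$ have parities that are independent of $k$ (the latter agrees with the parity of $p(m-1)$), so the combinations $\phi^{\pm pm}$ and $\phi^{\pm p(2k+m-1)}$ can be resolved uniformly through~\eqref{equ.z0ctl7a}, giving $F\sqrt5$ for even indices and $L$ for odd ones. This produces exactly three admissible parity patterns for $(p,m)$: both even, $p$ odd with $m$ even, and $p$ odd with $m$ odd. In each pattern I would convert the infinite sum to an infinite product by means of~\eqref{equ.clyqxe1}, and convert the finite right-hand sum $\sum_{k=1}^m\tanh^{-1}(\phi^{-p(2k-1)})$ into a finite product via~\eqref{equ.zdqegff}, where the odd/even split of $p(2k-1)$ selects the appropriate line of~\eqref{equ.zdqegff}.

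With the master product in hand, each identity is obtained by a specialization. Setting $(p,m)=(2n-1,2q)$ lands in the ``$p$ odd, $m$ even'' pattern and yields~\eqref{equ.ufuabxd} directly, while $(p,m)=(2n-1,2q-1)$ gives the ``$p$ odd, $m$ odd'' pattern and yields~\eqref{equ.qg32f22} directly; in both cases $p(2k-1)$ is odd, so the right-hand product already appears in the target shape $\bigl(F\sqrt5+2\bigr)/L$. The two remaining identities both come from the ``both even'' pattern, with $(p,m)=(4n,q)$ producing~\eqref{equ.ujzmeac} and $(p,m)=(4n-2,q)$ producing the second product identity of the theorem.

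The main obstacle, and the only genuinely delicate bookkeeping, lies in these two even cases. There the right-hand product is indexed at $p(2k-1)=2N$ with $N=(p/2)(2k-1)$, so I must first halve the index using $F_{2N}=F_NL_N$ (identity~\eqref{equ.wsr7wzv}) and then simplify $L_{2N}-2$. The latter does not have a single closed form: by~\eqref{equ.s6sil39} one gets $L_{2N}-2=5F_N^2$ when $N$ is even, whereas by~\eqref{equ.wle2jmq} one gets $L_{2N}-2=L_N^2$ when $N$ is odd. Since the parity of $N$ equals that of $p/2$, the choice $p=4n$ forces $N$ even and collapses the term to $L_N/(\sqrt5\,F_N)$, while $p=4n-2$ forces $N$ odd and collapses it to $\sqrt5\,F_N/L_N$; this single parity switch is precisely what splits the lone ``both even'' pattern into the two distinct identities~\eqref{equ.ujzmeac} and its successor, and it accounts for the reciprocal powers of $\sqrt5$. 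Verifying that the resulting indices $N$ become $2n(2k-1)$ and $(2n-1)(2k-1)$ respectively is then routine.
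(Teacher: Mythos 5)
Your proposal is correct and follows essentially the same route as the paper: the same substitution $f(k)=\tanh^{-1}(\phi^{-p(2k-1)})$ into~\eqref{equ.gy1asjs}, the same three parity cases for $(p,m)$, and the same specializations $(4n,q)$, $(4n-2,q)$, $(2n-1,2q)$, $(2n-1,2q-1)$, with the $L_{2N}-2$ parity split handled exactly as the paper intends. (Your label ``both even'' for the first pattern should really be ``$p$ even, $m$ arbitrary,'' since $q$ need not be even, but this is harmless because the parities of $pm$ and $p(m-1)$ are what matter and both are even whenever $p$ is.)
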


\begin{proof}
Choosing $f(k)=\tanh^{-1}(\phi^{-p(2k-1)})$~in~\eqref{equ.gy1asjs} and proceeding as in the previous section we obtain the following infinite sum identities:
\[
\sum_{k = 1}^\infty  {\tanh ^{ - 1} \left[ {\frac{{F_{mp} }}{{F_{2kp + mp - p} }}} \right]}  = \sum_{k = 1}^m {\tanh ^{ - 1} \frac{1}{{\phi ^{p(2k - 1)} }}}\,,\quad\mbox{$p$ even}\,,
\]
\[
\sum_{k = 1}^\infty  {\tanh ^{ - 1} \left[ {\frac{{F_{mp} \sqrt 5 }}{{L_{2kp + mp - p} }}} \right]}  = \sum_{k = 1}^m {\tanh ^{ - 1} \frac{1}{{\phi ^{p(2k - 1)} }}}\,,\quad\mbox{$p$ odd, $m$ even}\,, 
\]
and
\[
\sum_{k = 1}^\infty  {\tanh ^{ - 1} \left[ {\frac{{L_{mp} }}{{\sqrt 5 F_{2kp + mp - p} }}} \right]}  = \sum_{k = 1}^m {\tanh ^{ - 1} \frac{1}{{\phi ^{p(2k - 1)} }}}\,,\quad\mbox{$p\,m$ odd}\,. 
\]
Identities~\eqref{equ.ujzmeac} --- \eqref{equ.qg32f22} are established by converting each infinite sum identity to an infinite product identity with the appropriate choice of $m$ and $p$ in each case.

\end{proof}

In closing this section we observe that $q=1=n$ in~\eqref{equ.ufuabxd} and \eqref{equ.qg32f22} yield the following special evaluations:
 \[
\prod\limits_{k = 1}^\infty  {\frac{{L_{2k + 1}  + \sqrt 5 }}{{L_{2k + 1}  - \sqrt 5 }}}  = \phi ^4\, ,\quad\prod\limits_{k = 1}^\infty  {\frac{{\sqrt 5 F_{2k}  + 1}}{{\sqrt 5 F_{2k}  - 1}}}  = \phi ^3\,. 
\]

\subsection{Identities resulting from taking $f(k)=\tanh^{-1}(\phi^{-2pk})$~in~\eqref{equ.vdz8jvp} }
\begin{thm}
The following infinite product identities hold for $q,n\in\Zplus$:
\begin{equation}\label{equ.nm7xvk2}
\prod_{k = 1}^\infty  {\frac{{F_{4nq + 4nk}  + ( - 1)^{k - 1} F_{4nq} }}{{F_{4nq + 4nk}  + ( - 1)^k F_{4nq} }}}  = \prod_{k = 1}^q {\frac{{L_{2n(2k - 1)} F_{4nk} }}{{F_{2n(2k - 1)} L_{4nk} }}}\,,
\end{equation}
\begin{equation}\label{equ.p5moynn}
\prod_{k = 1}^\infty  {\frac{{F_{(2n - 1)(2q + 2k)}  + ( - 1)^{k - 1} F_{(2n - 1)2q} }}{{F_{(2n - 1)(2q + 2k)}  + ( - 1)^k F_{(2n - 1)2q} }}}  = 5^q \prod_{k = 1}^q {\frac{{F_{(2n - 1)(2k - 1)} F_{(2n - 1)2k} }}{{L_{(2n - 1)(2k - 1)} L_{(2n - 1)2k} }}}\,,
\end{equation}
\begin{equation}\label{equ.nvho6p6}
\prod_{k = 1}^\infty  {\frac{{L_{4nk + 4nq - 2n}  + ( - 1)^{k - 1} L_{4nq - 2n} }}{{L_{4nk + 4nq - 2n}  + ( - 1)^k L_{4nq - 2n} }}}  = \frac{1}{{\sqrt 5 }}\prod_{k = 1}^q {\frac{{L_{2n(2k - 1)} }}{{F_{2n(2k - 1)} }}} \prod_{k = 1}^{q - 1} {\frac{{F_{4nk} }}{{L_{4nk} }}}
\end{equation}
and
\begin{equation}\label{equ.tu8kmxl}
\begin{split}
&\prod_{k = 1}^\infty  {\frac{{F_{(2n - 1)(2q + 2k - 1)}  + ( - 1)^{k - 1} F_{(2n - 1)(2q - 1)} }}{{F_{(2n - 1)(2q + 2k - 1)}  + ( - 1)^k F_{(2n - 1)(2q - 1)} }}}\\ 
&\qquad\qquad= (\sqrt 5 )^{2q - 1} \prod_{k = 1}^q {\frac{{F_{(2n - 1)(2k - 1)} }}{{L_{(2n - 1)(2k - 1)} }}} \prod_{k = 1}^{q - 1} {\frac{{F_{(2n - 1)2k} }}{{L_{(2n - 1)2k} }}}\,.
\end{split}
\end{equation}

\end{thm}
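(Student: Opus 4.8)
The plan is to run exactly the argument of Section 2.1, but feeding $f(k)=\tanh^{-1}(\phi^{-2pk})$ into the \emph{alternating} telescoping identity \eqref{equ.vdz8jvp} rather than \eqref{equ.gy1asjs}. Since $\phi>1$ we have $f(N)=\tanh^{-1}(\phi^{-2pN})\to 0$ as $N\to\infty$ for every $p\ge 1$, so \eqref{equ.vdz8jvp} is applicable. The first task is to collapse the combined term $f(k)+(-1)^{m-1}f(k+m)$ on the left. When $m$ is odd the sign is $+$, so I would invoke the addition formula \eqref{equ.ohqvcqn}; when $m$ is even the sign is $-$, so I would invoke the subtraction formula \eqref{equ.gdoxus5} (the hypotheses $xy<1$ and $xy>-1$ hold automatically, as both arguments lie in $(0,1)$). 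After clearing $\phi^{-p(2k+m)}$ from numerator and denominator, the argument reduces to
\[
\frac{\phi^{pm}\pm\phi^{-pm}}{\phi^{p(2k+m)}\pm\phi^{-p(2k+m)}},
\]
the sign tracking the parity of $m$. The key simplification is that $p(2k+m)\equiv pm \pmod 2$, so numerator and denominator are read off from the \emph{same} case of \eqref{equ.z0ctl7a}; the $\sqrt 5$ factors cancel, leaving $L_{pm}/L_{p(2k+m)}$ in the case $m$ odd, $p$ even (which produces the Lucas identity \eqref{equ.nvho6p6}) and $F_{pm}/F_{p(2k+m)}$ in the three remaining parity cases.

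Next I would exponentiate \eqref{equ.vdz8jvp} using $\tanh^{-1}(x)=\tfrac12\log\frac{1+x}{1-x}$, the $y=1$ specialization of \eqref{equ.clyqxe1}. This turns the alternating sum into an infinite product in which the weight $(-1)^{k-1}$ becomes an alternating exponent, and rewriting $w^{(-1)^{k-1}}$ as a single fraction is precisely what manufactures the $(-1)^{k-1}$ and $(-1)^k$ in the numerators and denominators on the left-hand sides of \eqref{equ.nm7xvk2}--\eqref{equ.tu8kmxl}. The four stated identities then correspond to the four parity combinations of $p$ and $m$: I would take $(p,m)=(2n,2q)$ for \eqref{equ.nm7xvk2}, $(2n-1,2q)$ for \eqref{equ.p5moynn}, $(2n,2q-1)$ for \eqref{equ.nvho6p6}, and $(2n-1,2q-1)$ for \eqref{equ.tu8kmxl}.

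The substantive work is the simplification of the right-hand finite product $\prod_{k=1}^m\big(\tfrac{\phi^{2pk}+1}{\phi^{2pk}-1}\big)^{(-1)^{k-1}}$. Using the even case of \eqref{equ.zdqegff} I would write each factor as $\frac{F_{2pk}\sqrt 5}{L_{2pk}-2}$, then apply $F_{2pk}=F_{pk}L_{pk}$ from \eqref{equ.wsr7wzv} together with $L_{2pk}-2=5F_{pk}^2$ when $pk$ is even (from \eqref{equ.s6sil39}) or $L_{2pk}-2=L_{pk}^2$ when $pk$ is odd (from \eqref{equ.wle2jmq}). Each factor thus collapses to $\frac{L_{pk}}{\sqrt 5\,F_{pk}}$ (for $pk$ even) or $\frac{\sqrt 5\,F_{pk}}{L_{pk}}$ (for $pk$ odd). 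Finally I would split the product into odd- and even-indexed terms by \eqref{equ.qvxo2rv}: the even-indexed terms always have $pk$ even, while the odd-indexed terms inherit the parity of $p$. Carrying out the resulting $\sqrt 5$ bookkeeping reproduces the four prefactors ($1$, $5^q$, $1/\sqrt 5$, and $(\sqrt 5)^{2q-1}$) and the stated Fibonacci/Lucas arrangements.

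The step I expect to be the main obstacle is this last parity-splitting, where one must correctly track the sign $(-1)^{pk}$ that decides between the $5F_{pk}^2$ and $L_{pk}^2$ reductions of $L_{2pk}-2$: a wrong choice silently corrupts both the Fibonacci/Lucas placement and the net power of $\sqrt 5$ on the right. Everything else is a bounded, mechanical application of the identities assembled in Section 1.
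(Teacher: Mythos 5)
Your proposal is correct and follows essentially the same route as the paper: the same choice $f(k)=\tanh^{-1}(\phi^{-2pk})$ in the alternating telescoping formula, the same use of the addition/subtraction formulas according to the parity of $m$, the same conversion of the alternating sum to a product with exponents $(-1)^{k-1}$, and the same parity splitting of the finite product via the $F_{2pk}=F_{pk}L_{pk}$ and $L_{2pk}-2\in\{5F_{pk}^2,L_{pk}^2\}$ reductions, with the four $(p,m)$ assignments matching the paper's exactly. The $\sqrt5$ bookkeeping you flag as the delicate step does work out to the stated prefactors $1$, $5^q$, $1/\sqrt5$ and $(\sqrt5)^{2q-1}$.
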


\begin{proof}
Taking $f(k)=\tanh^{-1}(\phi^{-2pk})$~in~\eqref{equ.vdz8jvp}, setting $m=2q$ and making use of the identities~\eqref{equ.gdoxus5} and~\eqref{equ.myu8b1u} we have
\[
\sum_{k = 1}^\infty  {( - 1)^{k - 1} \tanh ^{ - 1} \left[ {\frac{{F_{2qp} }}{{F_{2kp + 2qp} }}} \right]}  = \sum_{k = 1}^{2q} {( - 1)^{k - 1} \tanh ^{ - 1} \frac{1}{{\phi ^{2pk} }}}\,,
\]
from which follows
\begin{equation}\label{qu.ub8uzh3}
\begin{split}
\prod_{k = 1}^\infty  {\frac{{F_{2pq + 2pk}  + ( - 1)^{k - 1} F_{2pq} }}{{F_{2pq + 2pk}  + ( - 1)^k F_{2pq} }}}  &= \prod_{k = 1}^{2q} {\frac{{\phi ^{2pk}  + ( - 1)^{k - 1} }}{{\phi ^{2pk}  + ( - 1)^k }}}\\ 
&= \prod_{k = 1}^q {\frac{{\phi ^{2p(2k - 1)}  + 1}}{{\phi ^{2p(2k - 1)}  - 1}}} \prod_{k = 1}^q {\frac{{\phi ^{4pk}  - 1}}{{\phi ^{4pk}  + 1}}}\\ 
& = \prod_{k = 1}^q {\frac{{F_{2p(2k - 1)} \sqrt 5 }}{{L_{2p(2k - 1)}  - 2}}\frac{{L_{4pk}  - 2}}{{F_{4pk} \sqrt 5 }}}\\ 
& = 5^q \prod_{k = 1}^q {\frac{{F_{2p(2k - 1)} }}{{L_{2p(2k - 1)}  - 2}}\frac{{F_{2pk} }}{{L_{2pk} }}}\,. 
\end{split}
\end{equation}
The final evaluation of the denominator of the product term in the last line of identity~\eqref{qu.ub8uzh3} depends on the parity of $p$. Setting $p=2n$ in~\eqref{qu.ub8uzh3}, making use of~\eqref{equ.s6sil39} proves~\eqref{equ.nm7xvk2} while setting $p=2n-1$ in~\eqref{qu.ub8uzh3}, utilizing~\eqref{equ.wle2jmq} proves~\eqref{equ.p5moynn}.

Taking $f(k)=\tanh^{-1}(\phi^{-2pk})$~in~\eqref{equ.vdz8jvp}, setting $m=2q-1$ and making use of the identities~\eqref{equ.ohqvcqn} and~\eqref{equ.z0ctl7a} we have
\[
\sum_{k = 1}^\infty  {( - 1)^{k - 1} \tanh ^{ - 1} \left[ {\frac{{L_{2qp - p} }}{{L_{2kp + 2qp - p} }}} \right]}  = \sum_{k = 1}^{2q - 1} {( - 1)^{k - 1} \tanh ^{ - 1} \frac{1}{{\phi ^{2pk} }}}\,,\quad\mbox{$p$ even}\,, 
\]
and
\[
\sum_{k = 1}^\infty  {( - 1)^{k - 1} \tanh ^{ - 1} \left[ {\frac{{F_{2qp - p} }}{{F_{2kp + 2qp - p} }}} \right]}  = \sum_{k = 1}^{2q - 1} {( - 1)^{k - 1} \tanh ^{ - 1} \frac{1}{{\phi ^{2pk} }}}\,,\quad\mbox{$p$ odd}\,, 
\]
from which identities~\eqref{equ.nvho6p6} and~\eqref{equ.tu8kmxl} follow immediately.

\end{proof}

\subsection{Identities resulting from taking $f(k)=\tanh^{-1}(\phi^{-p(2k-1)})$~in~\eqref{equ.vdz8jvp} }
\begin{thm}
The following infinite product identities hold for $q,n\in\Zplus$:
\begin{equation}\label{equ.eb4ic5q}
\prod_{k = 1}^\infty  {\frac{{F_{8nk + 8nq - 4n}  + ( - 1)^{k - 1} F_{8nq} }}{{F_{8nk + 8nq - 4n}  + ( - 1)^k F_{8nq} }}}  = \prod_{k = 1}^q {\frac{{L_{2n(4k - 3)} F_{2n(4k - 1)} }}{{F_{2n(4k - 3)} L_{2n(4k - 1)} }}}\,,
\end{equation}
\begin{equation}\label{equ.sy0qp2p}
\prod_{k = 1}^\infty  {\frac{{F_{(4n - 2)(2q + 2k - 1)}  + ( - 1)^{k - 1} F_{(2n - 1)4q} }}{{F_{(4n - 2)(2q + 2k - 1)}  + ( - 1)^k F_{(2n - 1)4q} }}}  = \prod_{k = 1}^q {\frac{{F_{(2n - 1)(4k - 3)} L_{(2n - 1)(4k - 1)} }}{{L_{(2n - 1)(4k - 3)} F_{(2n - 1)(4k - 1)} }}}\,,
\end{equation}
\begin{equation}\label{equ.nj41t02}
\begin{split}
&\prod_{k = 1}^\infty  {\frac{{L_{(2n - 1)(2q + 2k - 1)}  + ( - 1)^{k - 1} \sqrt 5 F_{(2n - 1)2q} }}{{L_{(2n - 1)(2q + 2k - 1)}  + ( - 1)^k \sqrt 5 F_{(2n - 1)2q} }}}\\  
&\qquad\qquad= \prod_{k = 1}^q {\frac{{(F_{(2n - 1)(4k - 3)} \sqrt 5  + 2)L_{(2n - 1)(4k - 1)} }}{{(F_{(2n - 1)(4k - 1)} \sqrt 5  + 2)L_{(2n - 1)(4k - 3)} }}}\,,
\end{split}
\end{equation}
\begin{equation}\label{equ.s62fvgq}
\prod_{k = 1}^\infty  {\frac{{L_{8n(k + q - 1)}  + ( - 1)^{k - 1} L_{4n(2q - 1)} }}{{L_{8n(k + q - 1)}  + ( - 1)^k L_{4n(2q - 1)} }}}  = \frac{1}{{\sqrt 5 }}\prod_{k = 1}^q {\frac{{L_{2n(4k - 3)} }}{{F_{2n(4k - 3)} }}} \prod_{k = 1}^{q - 1} {\frac{{F_{2n(4k - 1)} }}{{L_{2n(4k - 1)} }}}\,,
\end{equation}
\begin{equation}\label{equ.kmp83dm}
\begin{split}
&\prod_{k = 1}^\infty  {\frac{{L_{(8n - 4)(k + q - 1)}  + ( - 1)^{k - 1} L_{(4n - 2)(2q - 1)} }}{{L_{(8n - 4)(k + q - 1)}  + ( - 1)^k L_{(4n - 2)(2q - 1)} }}}\\ 
&\qquad\qquad= \sqrt 5 \prod_{k = 1}^q {\frac{{F_{(2n - 1)(4k - 3)} }}{{L_{(2n - 1)(4k - 3)} }}} \prod_{k = 1}^{q - 1} {\frac{{L_{(2n - 1)(4k - 1)} }}{{F_{(2n - 1)(4k - 1)} }}} 
\end{split}
\end{equation}
and
\begin{equation}\label{equ.f7wwhwq}
\begin{split}
&\prod_{k = 1}^\infty  {\frac{{L_{(4n - 2)(q + k - 1)}  + ( - 1)^{k - 1} \sqrt 5 F_{(2n - 1)(2q - 1)} }}{{L_{(4n - 2)(q + k - 1)}  + ( - 1)^k \sqrt 5 F_{(2n - 1)(2q - 1)} }}}\\
&\qquad\qquad=\prod_{k = 1}^q {\frac{{(F_{(2n - 1)(4k - 3)} \sqrt 5  + 2)}}{{L_{(2n - 1)(4k - 3)} }}} \prod_{k = 1}^{q - 1} {\frac{{L_{(2n - 1)(4k - 1)} }}{{(F_{(2n - 1)(4k - 1)} \sqrt 5  + 2)}}}\,.
\end{split}
\end{equation}

\end{thm}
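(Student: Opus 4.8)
The plan is to proceed exactly as in the preceding three theorems: substitute $f(k)=\tanh^{-1}(\phi^{-p(2k-1)})$ into the alternating telescoping identity~\eqref{equ.vdz8jvp}, collapse the summand on the left by the $\tanh^{-1}$ addition/subtraction laws, and then exponentiate to pass from a sum identity to a product identity. Since $\phi>1$ we have $f(N)=\tanh^{-1}(\phi^{-p(2N-1)})\to 0$ as $N\to\infty$, so \eqref{equ.vdz8jvp} applies with no convergence concerns. Writing $x=\phi^{-p(2k-1)}$ and $y=\phi^{-p(2k+2m-1)}$, the combination $f(k)+(-1)^{m-1}f(k+m)$ is a \emph{sum} of arctanh's when $m$ is odd and a \emph{difference} when $m$ is even; applying \eqref{equ.ohqvcqn} or \eqref{equ.gdoxus5} and clearing $\phi^{\pm p(2k+m-1)}$ from numerator and denominator collapses the argument to
\[
\frac{\phi^{pm}\pm\phi^{-pm}}{\phi^{p(2k+m-1)}\pm\phi^{-p(2k+m-1)}},
\]
with the upper (lower) signs taken for $m$ odd (even).

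The four evaluations in \eqref{equ.z0ctl7a} then turn this into one of three shapes, according to the parities of $pm$ and of $p(2k+m-1)$: namely $F_{pm}/F_{p(2k+m-1)}$ when $p$ and $m$ are both even, $L_{pm}/L_{p(2k+m-1)}$ when $p$ is even and $m$ is odd, and $\sqrt5\,F_{pm}/L_{p(2k+m-1)}$ whenever $p$ is odd (in the last case both parities of $m$ give the same form, since $m$ odd forces $p(2k+m-1)$ even while $m$ even forces $p(2k+m-1)$ odd, and \eqref{equ.z0ctl7a} returns $L$ in either event). Exponentiating each via $\tanh^{-1}z=\tfrac12\log\frac{1+z}{1-z}$ recasts the alternating sign as an exponent $(-1)^{k-1}$ on the factor $\frac{A+B}{A-B}$, where $B$ is the (fixed) numerator and $A_k$ the (moving) denominator of the collapsed argument; this is exactly $\frac{A_k+(-1)^{k-1}B}{A_k+(-1)^{k}B}$, matching the left members of \eqref{equ.eb4ic5q}--\eqref{equ.f7wwhwq}.

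Next I would process the right-hand side $\sum_{k=1}^m(-1)^{k-1}\tanh^{-1}(\phi^{-p(2k-1)})$, which exponentiates to $\prod_{k=1}^m\left(\frac{\phi^{p(2k-1)}+1}{\phi^{p(2k-1)}-1}\right)^{(-1)^{k-1}}$. Since $p(2k-1)$ has the parity of $p$, the factors are rewritten by \eqref{equ.zdqegff} as $\frac{F_{p(2k-1)}\sqrt5}{L_{p(2k-1)}-2}$ for $p$ even and $\frac{F_{p(2k-1)}\sqrt5+2}{L_{p(2k-1)}}$ for $p$ odd. Splitting the range by~\eqref{equ.qvxo2rv} into odd and even $k$ produces the index patterns $4k-3$ and $4k-1$ seen on the right of each identity. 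For the $p$-even factors the doubling relations reduce $F_{p(2k-1)}$ via \eqref{equ.wsr7wzv} and $L_{p(2k-1)}-2$ via the Lucas/Fibonacci square identities; crucially the half-argument $p(2k-1)/2$ is \emph{even} when $p=4n$ (so $L_{p(2k-1)}-2=5F_{2n(2k-1)}^2$ by \eqref{equ.s6sil39}, giving $\frac{L_{2n(2k-1)}}{\sqrt5\,F_{2n(2k-1)}}$) but \emph{odd} when $p=4n-2$ (so $L_{p(2k-1)}-2=L_{(2n-1)(2k-1)}^2$ by \eqref{equ.wle2jmq}, giving $\frac{F_{(2n-1)(2k-1)}\sqrt5}{L_{(2n-1)(2k-1)}}$).

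Finally I would read off the six identities from the six admissible parameter pairs: $(p,m)=(4n,2q),(4n-2,2q),(2n-1,2q)$ yield \eqref{equ.eb4ic5q}, \eqref{equ.sy0qp2p}, \eqref{equ.nj41t02}, while $(p,m)=(4n,2q-1),(4n-2,2q-1),(2n-1,2q-1)$ yield \eqref{equ.s62fvgq}, \eqref{equ.kmp83dm}, \eqref{equ.f7wwhwq}. A closing bookkeeping step tracks the powers of $\sqrt5$: for $m=2q$ the equal counts of odd and even $k$ make them cancel exactly, whereas for $m=2q-1$ the surplus of one odd index leaves the prefactor $\tfrac1{\sqrt5}$ (for $p=4n$) or $\sqrt5$ (for $p=4n-2$); for $p$ odd no $\sqrt5$ is pulled out and it stays attached inside $F\sqrt5+2$. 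I expect the main obstacle to be precisely this parity bookkeeping---correctly choosing among the four formulas of \eqref{equ.z0ctl7a} and between \eqref{equ.s6sil39} and \eqref{equ.wle2jmq}, and above all keeping the $p=4n$ and $p=4n-2$ branches separate---rather than any single delicate estimate.
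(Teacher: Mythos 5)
Your proposal is correct and follows essentially the same route as the paper: the same substitution $f(k)=\tanh^{-1}(\phi^{-p(2k-1)})$ into \eqref{equ.vdz8jvp} with $m=2q$ or $m=2q-1$, the same collapse to the intermediate summation identities \eqref{equ.y84rd8p}--\eqref{equ.gjgj80e}, and the same parameter choices $p=4n$, $p=4n-2$, $p=2n-1$. In fact your parity bookkeeping (the choice between \eqref{equ.s6sil39} and \eqref{equ.wle2jmq} according to whether $p/2$ is even or odd, and the $\sqrt5$ accounting) supplies details the paper's proof leaves implicit.
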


\begin{proof}
Taking $f(k)=\tanh^{-1}(\phi^{-p(2k-1)})$~in~\eqref{equ.vdz8jvp} and setting $m=2q$ we obtain the following summation identities 
\begin{equation}\label{equ.y84rd8p}
\sum_{k = 1}^\infty  {( - 1)^{k - 1} \tanh ^{ - 1} \left[ {\frac{{F_{2qp} }}{{F_{2kp + 2qp - p} }}} \right]}  = \sum_{k = 1}^{2q} {( - 1)^{k - 1} \tanh ^{ - 1} \frac{1}{{\phi ^{2pk - p} }}}\,,\quad\mbox{$p$ even}\,, 
\end{equation}
and
\begin{equation}\label{equ.zraml30}
\sum_{k = 1}^\infty  {( - 1)^{k - 1} \tanh ^{ - 1} \left[ {\frac{{F_{2qp} \sqrt 5 }}{{L_{2kp + 2qp - p} }}} \right]}  = \sum_{k = 1}^{2q} {( - 1)^{k - 1} \tanh ^{ - 1} \frac{1}{{\phi ^{2pk - p} }}}\,,\quad\mbox{$p$ odd} \,.
\end{equation}
Identities~\eqref{equ.eb4ic5q} and~\eqref{equ.sy0qp2p} follow from~\eqref{equ.y84rd8p} while the identity~\eqref{equ.nj41t02} follows directly from~\eqref{equ.zraml30}.

\bigskip

Taking $f(k)=\tanh^{-1}(\phi^{-p(2k-1)})$~in~\eqref{equ.vdz8jvp} and setting $m=2q-1$ we obtain the following summation identities
\begin{equation}\label{equ.tus76qt}
\sum_{k = 1}^\infty  {( - 1)^{k - 1} \tanh ^{ - 1} \left[ {\frac{{L_{p(2q - 1)} }}{{L_{2p(k + q - 1)} }}} \right]}  = \sum_{k = 1}^{2q - 1} {( - 1)^{k - 1} \tanh ^{ - 1} \frac{1}{{\phi ^{2pk - p} }}}\,,\quad\mbox{$p$ even} 
\end{equation}
and
\begin{equation}\label{equ.gjgj80e}
\sum_{k = 1}^\infty  {( - 1)^{k - 1} \tanh ^{ - 1} \left[ {\frac{{F_{p(2q - 1)} \sqrt 5 }}{{L_{2p(k + q - 1)} }}} \right]}  = \sum_{k = 1}^{2q - 1} {( - 1)^{k - 1} \tanh ^{ - 1} \frac{1}{{\phi ^{2pk - p} }}}\,,\quad\mbox{$p$ odd}\,. 
\end{equation}
Identities~\eqref{equ.s62fvgq} and~\eqref{equ.kmp83dm} follow from evaluating the summation identity~\eqref{equ.tus76qt} at $p=4n$ and $p=4n-2n$, respectively. Identity~\eqref{equ.f7wwhwq} is obtained by converting the summation identity~\eqref{equ.gjgj80e} to an infinite product identity at $p=2n-1$.

\end{proof}

From~\eqref{equ.nj41t02} and \eqref{equ.f7wwhwq} at $q=1=n$ come the special evaluations 
\[
\prod\limits_{k = 1}^\infty  {\frac{{L_{2k + 1}  + ( - 1)^{k - 1} \sqrt 5 }}{{L_{2k + 1}  + ( - 1)^k \sqrt 5 }}}  = \phi ^2\,,\quad\prod\limits_{k = 1}^\infty  {\frac{{L_{2k}  + ( - 1)^{k - 1} \sqrt 5 }}{{L_{2k}  + ( - 1)^k \sqrt 5 }}}  = \phi ^3\,. 
\]


\end{document}